\documentclass[review]{elsarticle}

\usepackage{lineno,hyperref}
\usepackage[fleqn]{amsmath}
\usepackage{mathptmx}
\setlength{\mathindent}{0pt}
\usepackage{amsfonts}
\DeclareMathAlphabet{\mathcal}{OMS}{cmsy}{m}{n}
\usepackage{enumerate}
\usepackage{titletoc}
\usepackage{amssymb,mathrsfs, amsthm,enumerate}
\linespread{1.0}
\theoremstyle{plain}
\newtheorem{thm}{Theorem}[section]

\newtheorem{prop}[thm]{Proposition}

\theoremstyle{definition}
\newtheorem{defn}{Definition}[section]

\modulolinenumbers[5]

\journal{Indagationes Mathematicae}









\bibliographystyle{elsarticle-num}

\begin{document}

\begin{frontmatter}

\title{Implication-Based Intuitionistic Fuzzy Finite State Machine over a Finite Group}


\author[mymainaddress]{M. Selvarathi \corref{mycorrespondingauthor}}
\cortext[mycorrespondingauthor]{Corresponding author}
\ead{selvarathi.maths@gmail.com}


\address[mymainaddress]{Department of Mathematics, Karunya Institute of Technology and Sciences, Coimbatore, India}

\begin{abstract}
In this paper \emph{implication-based intuitionistic fuzzy finite state machine} otherwise called as \emph{Implication-based intuitionistic fuzzy semiautomaton} (IB-IFSA) over a finite group is defined and investigated intensively. The abstraction of \emph{implication-based intuitionistic fuzzy kernel} and \emph{implication-based intuitionistic fuzzy subsemiautomaton} of an IB-IFSA over a finite group are brought about using the notion of \emph{implication-based intuitionistic fuzzy subgroup} and \emph{implication-based intuitionistic fuzzy normal subgroup}. The necessary and sufficient condition for an \emph{implication-based intuitionistic fuzzy normal subgroup}  to be an \emph{implication-based intuitionistic fuzzy kernel}  as well as the necessary and sufficient condition for an \emph{implication-based intuitionistic fuzzy subgroup} to be an \emph{implication-based intuitionistic fuzzy subsemiautomaton} of an IB-IFSA  are studied. 
\end{abstract}

\begin{keyword}
\emph{implication-based} \emph{intuitionistic fuzzy subgroup}, \emph{implication-based} \emph{intuitionistic} \emph{fuzzy semiautomaton},  \emph{implication-based} \emph{intuitionistic fuzzy kernel}, \emph{implication-} \emph{based intuitionistic fuzzy subsemiautomaton}
\MSC[2010] 03E72, 08A72, 20N25
\end{keyword}

\end{frontmatter}

\section{Introduction}
\par In 1965, Zadeh \cite{Zad65} first launched the concept of \emph{fuzzy} set. Later it was Wee \cite{Wee69} who perceived the notion of \emph{fuzzy automata} in 1969. In 1971, Rosenfeld  \cite{Ros71} elaborated the study with the application of the concept of fuzzy sets initiated by Zadeh to groups. It led to the induction of many research works that were launched on their algebraic structures. Further seeral studies had also been done on the \emph{fuzzy normal subgroups} by \cite{Dib98}, \cite{Mal92} and \cite{Muk84}. Asok Kumar \cite{Aso99} has made a study about the products of fuzzy subgroups. In 1986, a generalization on the concept of fuzzy set proposed by Zadeh was presented by Atanasssov \cite{Ant86} along with the introduction on the notion of \emph{intuitionistic fuzzy set}. The concpet of \emph{intuitionisitc fuzzy subgroup} and \emph{intuitionistic fuzzy normal subgroup} was asserted by Li Xiaoping \cite{LiX00}. A study on the algebraic techniques of the fuzzy finite state machine in fuzzy automata theory was prompted by Malik  \cite{Mal97} in 1997. Hofer \cite{Hof87} and Fong \cite{Fon88} inducted a study on group semiautomaton. P. Das \cite{Das97} fuzzified the notion of group semiautomaton he insinuated the concept of fuzzy semiautomaton over a finite group. An assertion on the \emph{implication-based} \emph{fuzzy subgroup} was proposed by Yuan \cite{Yua03}. A proposition on the \emph{implication-based fuzzy normal subgroup}, \emph{implication-based fuzzy semiautomaton} (IBFSA) and \emph{implication-based intuitionistic fuzzy subgroup} over a finite group have been postulated by M. Selva Rathi \cite{Rat15} \cite{Rat16}  and  \cite{Rat17} in 2015. The research work in this paper involves further study in the above field and the introuduction of the concept of \emph{implication-based intuitionistic fuzzy semiautomaton} of a finite group with the use of \emph{implication-based intuitionistic fuzzy subgroup}. The paper also exemplifies the \emph{implication-based intuitionistic fuzzy kernel} and \emph{implication-based intuitionistic fuzzy subsemiautomaton} of an \emph{implication-based intuitionistic fuzzy semiautomaton} and attempts to prove few properties.

\section{Preliminaries}
\begin{defn} \cite{Ros71}
Let $(\Omega,\cdot)$ be a group. Let a fuzzy set in $\Omega$ be a function $A$ from $\Omega$ to $[0,1]$. $A$ will be called a \textit{fuzzy subgroup} of $\Omega$, if for all ${\xi}_1, {\xi}_2$ in $\Omega$, $A({\xi}_1{\xi}_2) \geq min\big( A({\xi}_1), A({\xi}_2)\big) $ and $A({{\xi}_1}^{-1}) \geq A({\xi}_1)$
\end{defn}

\begin{defn}\cite{Ant86} 
Let $X$ be a nonempty classical set. The traid formed as \\ $A = \{ \langle \xi, \mu_A(\xi),$ $ \nu_A(\xi)\rangle \vert \xi \in X \} $ on $X$ is called an \emph{intuitionistic fuzzy set} on $X$, where the functions $\mu_A: A \rightarrow (0,1)$ and $\nu_A : A \rightarrow (0,1) $ denotes the degree of membership and the degree of nonmembership of each element $\xi \in X$ to the set $A$ respectively and $0 \leq \mu_A(\xi) + \nu_A(\xi) \leq 1$
\end{defn}

\begin{defn} \cite{LiXWG00}
Let $\Omega$ be a classical group, \emph{the intuitionistic fuzzy subset} \\ $A = \{ \langle \xi, \mu_A(\xi),$ $ \nu_A(\xi)\rangle \vert \xi \in \Omega \} $ is called an \emph{intuitionistic fuzzy group} on $\Omega$, if the following conditions are satisfied.
\begin{enumerate}[(i)]
	\item $ \mu_A(\xi \psi) \geq min\{ \mu_A(\xi), \mu_A(\psi)\}$, $\nu_A(\xi \psi) \leq max\{ \nu_A(\xi), \nu_A(\psi)\}$ , for all $\xi, \psi  \in \Omega$
	\item $\mu_A({\xi}^{-1}) \geq \mu_A(\xi)$,  $\nu_A({\xi}^{-1}) \leq \nu_A(\xi)$, for all $\xi \in \Omega$
\end{enumerate}
\end{defn}

\begin{defn}\cite{LiX00}
Let $\Omega$ be a classical group,  $A = \{ \langle \xi, \mu_A(\xi), \nu_A(\xi)\rangle \vert \xi \in \Omega \} $ be an \emph{intuitionistic fuzzy set} on $\Omega$, then $A$ is called \emph{intuitionistic fuzzy normal subgroup} on $\Omega$ if $\mu_A( \xi \psi {\xi}^{-1}) \geq \mu_A(\psi)$, $\nu_A(\xi \psi {\xi}^{-1}) \leq \nu_A(\psi)$ for all $\xi, \psi \in \Omega$
\end{defn}

\par Let $X$ be an universe of discourse and $(\Omega, \cdot)$ be a group. In \emph{fuzzy} logic, $[\alpha]$ is used to denote the truth value of \emph{fuzzy} proposition $\alpha$. The \emph{fuzzy} logical and the corresponding set theoretical notations used in this paper are \\$(\xi \in A) = A(\xi);$\\ $(\alpha \wedge \beta) = min\{ [\alpha], [\beta]\};$\\  $(\alpha \rightarrow \beta) = min\{1, 1 - [\alpha]+ [\beta] \} ;$\\ $(\forall  \xi \mbox{  } \alpha(\xi)) = {inf}_{\xi \in X}[\alpha(\xi)];$\\ $(\exists  \xi \mbox{  } \alpha(\xi)) = {sup}_{\xi \in X}[\alpha(\xi)];$ and  \\$\vDash \alpha \mbox{ if and only if } [\alpha] = 1 \mbox{ for all valuations.}$ \\

The truth valuation rules used here are that of the \L ukasiewicz system of continuous-valued logic.\\

The concept of $\lambda $ - tautology is $\vDash_{\lambda}\alpha$ if and only if $ [\alpha] \geq \lambda$ for all valuation by Ying \cite{Yin91}.

\begin{defn} \cite{Yua03}
Let $A$ be a \textit{fuzzy} subset of a finite group $\Omega$ and $\lambda \in \left. \left(0,1\right.\right]$ is a fixed number. If for any ${\xi}_1, {\xi}_2 \in \Omega$, $\vDash_{\lambda}({\xi}_1 \in A)\wedge ({\xi}_2 \in A) \rightarrow ({\xi}_1 {\xi}_2 \in A)$ and $\vDash_{\lambda} ({\xi}_1 \in A) \rightarrow ({{\xi}_1}^{-1} \in A)$.
Then $A$ is called an \textit{implication-based fuzzy subgroup} of $\Omega$.
\end{defn}

\begin{defn} \cite{Rat15}
Let $A$ be an \emph{implication-based fuzzy subgroup} of $\Omega$, $\lambda \in \left. \left(0,1\right.\right]$ is a fixed number and $f: \Omega \rightarrow \Omega$ be a function defined on $\Omega$. Then the \emph{implication-based fuzzy subgroup} $B$ of $f(\Omega)$ is defined by $\vDash_{\lambda}\left( \exists \xi \{ (\xi \in A)\}; \xi \in f^{-1}(\psi) \right) \rightarrow (\psi \in B)$, for all $\psi \in f(\Omega)$ \\

Similarly if $B$ is an \emph{implication-based fuzzy subgroup} of $f(\Omega)$ then the \emph{implication-based fuzzy subgroup} $A = f \circ B$ in $\Omega$ is defined as $\vDash_{\lambda}(f(\xi) \in B) \rightarrow (\xi \in A)$ for all $\xi \in \Omega$ and is called the pre-image of $B$ under $f$.
\end{defn}

\begin{defn} \cite{Rat15}
An \emph{implication-based fuzzy subgroup} $A$ of $\Omega$ is called an \emph{implication-based fuzzy normal subgroup} if \,
$\vDash_{\lambda} (\xi \psi \in A) \rightarrow (\psi \xi \in A) \quad \forall \xi, \psi \in \Omega$ where $\lambda \in \left. \left(0,1\right.\right]$ is a fixed number.
\end{defn}

\begin{prop} \cite{Rat15}
\label{Proposition 2.1}
Let A be an implication-based fuzzy subgroup of a finite group $\Omega$ then for any $\xi \in \Omega$, $\vDash_{\lambda}(\xi \in A) \rightarrow (\epsilon \in A)$ where $\epsilon$ is the identity element of the group $\Omega$.
\end{prop}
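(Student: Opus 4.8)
The plan is to unwind the \L ukasiewicz semantics of the target formula and then feed it through the two defining tautologies of an implication-based fuzzy subgroup. Writing the connective out, $[\,(\xi \in A) \to (\epsilon \in A)\,] = \min\{1,\, 1 - A(\xi) + A(\epsilon)\}$, so, since $\lambda \le 1$ makes the clamp at $1$ automatic, the assertion $\vDash_{\lambda} (\xi \in A) \to (\epsilon \in A)$ is equivalent to the single numerical inequality $A(\epsilon) \ge A(\xi) + \lambda - 1$. This reduces the whole proposition to producing a suitable lower bound for $A(\epsilon)$ in terms of $A(\xi)$.

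To obtain such a bound I would exploit the factorisation $\epsilon = \xi\,\xi^{-1}$. Applying the closure tautology $\vDash_{\lambda} (\xi_1 \in A)\wedge(\xi_2\in A)\to(\xi_1\xi_2\in A)$ with $\xi_1=\xi$ and $\xi_2=\xi^{-1}$ and unfolding the connectives gives $1 - \min\{A(\xi),A(\xi^{-1})\} + A(\epsilon) \ge \lambda$, that is, $A(\epsilon) \ge \min\{A(\xi),A(\xi^{-1})\} + \lambda - 1$. Comparing this with the target inequality, it would suffice to resolve the minimum in favour of $A(\xi)$, i.e. to know $A(\xi^{-1}) \ge A(\xi)$; the inverse tautology $\vDash_{\lambda} (\xi\in A)\to(\xi^{-1}\in A)$ is exactly the tool meant to supply this comparison, after which $A(\epsilon)\ge A(\xi)+\lambda-1$ follows at once.

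The delicate point — and the one I expect to be the genuine obstacle — is precisely the passage from the inverse tautology to the pointwise comparison $A(\xi^{-1})\ge A(\xi)$. Unwinding $\vDash_{\lambda}(\xi\in A)\to(\xi^{-1}\in A)$ only yields $A(\xi^{-1}) \ge A(\xi) + \lambda - 1$, which for $\lambda < 1$ is strictly weaker than $A(\xi^{-1})\ge A(\xi)$; if the minimum is in fact attained at $A(\xi^{-1})$, the closure step alone degrades the bound to $A(\epsilon) \ge A(\xi) + 2(\lambda-1)$, losing an extra $1-\lambda$. To close this gap I would first symmetrise, applying the inverse tautology also to $\xi^{-1}$ to get $A(\xi)\ge A(\xi^{-1})+\lambda-1$, so that $A(\xi)$ and $A(\xi^{-1})$ differ by at most $1-\lambda$; and, should that still prove insufficient, I would invoke the finiteness of $\Omega$ by running the closure tautology along the powers of $\xi$ up to $\xi^{n}=\epsilon$, where $n$ is the order of $\xi$, and seek a bound that does not accumulate the $1-\lambda$ losses at each step. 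Producing a clean estimate in this problematic branch, rather than the two routine unfoldings, is where the real work of the argument lies.
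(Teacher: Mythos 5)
Your reduction of the claim to the numerical inequality $A(\epsilon) \ge A(\xi) + \lambda - 1$ is correct, and you have put your finger on exactly the right difficulty: the inverse axiom only yields $A(\xi^{-1}) \ge A(\xi) + \lambda - 1$, so the closure step applied to $\epsilon = \xi\xi^{-1}$ may lose an extra $1-\lambda$. But the proposal stops at describing repair strategies (symmetrising, iterating along the powers of $\xi$) without carrying any of them out, and in fact none of them can work: under the \L ukasiewicz valuation stated in the paper the proposition is false for $\lambda < 1$. Take $\Omega = \mathbb{Z}_3 = \{\epsilon, a, a^2\}$, $\lambda = 0.9$, and $A(\epsilon) = 0.3$, $A(a) = 0.5$, $A(a^2) = 0.4$. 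One checks directly that $A(\xi_1\xi_2) \ge \min\{A(\xi_1),A(\xi_2)\} - 0.1$ for all nine products and $A(\xi^{-1}) \ge A(\xi) - 0.1$ for all three elements, so $A$ is an implication-based fuzzy subgroup for this $\lambda$; yet $A(\epsilon) = 0.3 < 0.4 = A(a) - (1-\lambda)$, so $\vDash_{\lambda}(a \in A) \rightarrow (\epsilon \in A)$ fails. This example realises precisely the accumulated $2(1-\lambda)$ loss you feared: symmetrisation only bounds the difference between $A(\xi)$ and $A(\xi^{-1})$ by $1-\lambda$ (here it is exactly $0.1$), and running the closure axiom along the powers of $\xi$ sheds a further $1-\lambda$ at every step, so finiteness of $\Omega$ buys you nothing.

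Note also that the paper itself offers no proof to compare against: Proposition~\ref{Proposition 2.1} is imported from \cite{Rat15} without argument. The statement is salvageable only when the hypotheses force the crisp Rosenfeld conditions --- for instance when $\lambda = 1$, or when the implication is read as a G\"odel- or Gaines--Rescher-type operator rather than the \L ukasiewicz one --- in which case $A(\xi^{-1}) \ge A(\xi)$ and $A(\xi_1\xi_2) \ge \min\{A(\xi_1),A(\xi_2)\}$ hold on the nose and the classical one-line argument $A(\epsilon) = A(\xi\xi^{-1}) \ge \min\{A(\xi),A(\xi^{-1})\} \ge A(\xi)$ gives the result. In the present setting the most your two-step estimate actually delivers is the weaker conclusion $\vDash_{2\lambda - 1}(\xi \in A) \rightarrow (\epsilon \in A)$ (meaningful only for $\lambda > \tfrac{1}{2}$), and the counterexample shows this is sharp.
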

Hereafter let $\Omega$ be a finite group with the identity element $'\epsilon'$ and $\lambda \in \left. \left(0,1\right.\right]$ is a fixed number.

\section{Implication-Based Intuitionistic Fuzzy Semiautomaton over a Finite Group}
\begin{defn}
Let ${\mathbb{IF}}_{\mathscr{A}} = \langle A,B \rangle$ be an \emph{implication-based intuitionistic fuzzy subgroup} over a finite group $\Omega$. An \emph{implication-based intuitionistic fuzzy semiautomaton} over the finite group $(\Omega, \cdot)$ is a triple $\mathbb{IS} = \left(\Omega, \Delta, {\mathbb{IF}}_{\mathscr{A}} \right)$ where $\Delta$ denotes the set of all logic variables. 
\end{defn}
Let $\Delta^{\ast}$ denote the set of all combination of these logic variables along with the \textbf{0} function.
\begin{defn}
Let $\mathbb{IS} = \left(\Omega, \Delta, {\mathbb{IF}}_{\mathscr{A}} \right)$ be an \emph{implication-based intuitionistic fuzzy semiautomaton} over the finite group $\Omega$. Define ${\mathbb{IF}}_{\mathscr{A}^{\ast}} = \langle A^{\ast},B^{\ast} \rangle$ in $\Omega \times \Delta^{\ast} \times \Omega $ by
\begin{equation}
\begin{aligned}
&\vDash_{\lambda}((\alpha,\textbf{0},\beta) \in A^{\ast}) \rightarrow 0 \quad \quad \left(Here \mbox{  } \lambda = 0 \right) \\ \nonumber
&\vDash_{\lambda}((\alpha,\textbf{0},\beta) \in B^{\ast}) \rightarrow 1 \\ \nonumber
&\vDash_{\lambda}\left(\exists \gamma \{((\beta,\xi,\gamma)\in A^{\ast}) \wedge ((\gamma,\omega,\alpha) \in A^{\ast}) \}; \gamma \in \Omega \right) \rightarrow ((\beta, \xi \odot \omega, \alpha) \in A^{\ast})  \\
&\vDash_{\lambda} \left( \forall \gamma \{ ((\beta, \xi, \gamma)\in B^{\ast}) \vee ((\gamma, \omega, \alpha) \in B^{\ast}) \};  \gamma \in \Omega \right) \rightarrow ((\beta, \xi \odot \omega, \alpha) \in B^{\ast}) \\
&\mbox{ } \hspace{6.5 cm} \mbox{for all} \quad \alpha, \beta \in \Omega; \xi \in \Delta^{\ast}; \omega \in \Delta \nonumber
\end{aligned}
\end{equation}
\end{defn}

\begin{thm}
Let $\mathbb{IS} = \left(\Omega, \Delta, {\mathbb{IF}}_{\mathscr{A}} \right)$ be an implication-based intuitionistic fuzzy semiautomaton over the finite group $\Omega$. Then 
\begin{equation}
\begin{aligned}
&\vDash_{\lambda}\left(\exists \gamma \{((\beta, \xi,\gamma)\in A^{\ast}) \wedge ((\gamma,\psi,\alpha) \in A^{\ast}) \}; \gamma \in \Omega \right) \rightarrow ((\beta, \xi \odot \psi, \alpha) \in A^{\ast}) \\
&\vDash_{\lambda} \left( \forall \gamma \{ ((\beta,\xi,\gamma)\in B^{\ast}) \vee ((\gamma,\psi,\alpha) \in B^{\ast}) \};  \gamma \in \Omega \right) \rightarrow ((\beta, \xi \odot \psi, \alpha) \in B^{\ast}) \\
&\mbox{ } \hspace{6.5 cm} \mbox{for all} \quad \alpha, \beta \in \Omega; \xi, \psi \in \Delta^{\ast} \nonumber
\end{aligned}
\end{equation}
\end{thm}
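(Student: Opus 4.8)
The plan is to argue by induction on the length $n$ of the word $\psi \in \Delta^{\ast}$ (the number of logic variables whose combination produces $\psi$), treating the membership claim for $A^{\ast}$ and the non-membership claim for $B^{\ast}$ in parallel; the latter is the exact lattice dual of the former, with $\vee$, $\max$ and the universal quantifier replacing $\wedge$, $\min$ and the existential quantifier. At every step I would pass from the $\lambda$-tautology formulation to plain truth-value inequalities through the \L ukasiewicz rules: $\vDash_{\lambda}(P \rightarrow Q)$ says precisely that $[Q] \geq [P] - (1 - \lambda)$, while a quantified membership assertion over the intermediate state $\gamma \in \Omega$ is read as the corresponding $\sup$--$\min$ (respectively $\inf$--$\max$) of truth values.

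For the base case $n = 1$ we have $\psi = \omega \in \Delta$, and both displayed implications are then verbatim the defining clauses of ${\mathbb{IF}}_{\mathscr{A}^{\ast}}$, so nothing needs to be shown. The degenerate element $\psi = \textbf{0}$ is disposed of separately from the first two defining clauses (and $\xi \odot \textbf{0} = \textbf{0}$): both sides collapse to the constant value $0$ for $A^{\ast}$ and $1$ for $B^{\ast}$, and the implication holds with truth value $1 \geq \lambda$.

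For the inductive step I would factor $\psi = \sigma \odot \omega$ with $\sigma \in \Delta^{\ast}$ of length $n$ and $\omega \in \Delta$, and use associativity of $\odot$ to write $\xi \odot \psi = (\xi \odot \sigma) \odot \omega$. Applying the defining clause to the word $\xi \odot \sigma$ and the single variable $\omega$ resolves $[((\beta, \xi \odot \psi, \alpha) \in A^{\ast})]$ through one intermediate state $\delta \in \Omega$; applying the induction hypothesis to the pair $(\xi, \sigma)$ then resolves $[((\beta, \xi \odot \sigma, \delta) \in A^{\ast})]$ through a second intermediate state $\gamma$; and one further use of the clause for $\sigma$ and $\omega$ recombines $\sigma \odot \omega = \psi$. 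What remains is bookkeeping: commute the two existential quantifiers over $\gamma$ and $\delta$, use monotonicity of $\min$ and $\sup$ to carry the implication inside the quantifier, and absorb the nested $\sup$--$\min$ into the single $\sup_{\gamma}$ on the left of the claim. The $B^{\ast}$ statement falls out by repeating these moves with $\max$, $\inf$ and $\vee$.

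The step I expect to be the real obstacle is keeping the threshold at $\lambda$ rather than letting it erode. A single use of a defining clause only furnishes $[Q] \geq [P] - (1 - \lambda)$, so chaining the clause for $\omega$ with the induction hypothesis for $\sigma$ naively produces a bound of the form $[P] - 2(1 - \lambda)$, which is strictly weaker than the claimed $\lambda$-tautology. The way through is to use the defining clauses not as one-sided implications but as the genuine $\sup$--$\min$ (respectively $\inf$--$\max$) closure, i.e. to establish first the exact identity $[((\beta, \sigma \odot \omega, \alpha) \in A^{\ast})] = \sup_{\gamma} \min\{[((\beta, \sigma, \gamma) \in A^{\ast})], [((\gamma, \omega, \alpha) \in A^{\ast})]\}$, so that eliminating each intermediate state is an equality and incurs no penalty. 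Once this associativity of the extended intuitionistic fuzzy relation is in hand the composition holds exactly, and it is therefore a $\lambda$-tautology a fortiori; isolating and proving that exact identity is the crux that makes the induction close at the same $\lambda$ at which it opened.
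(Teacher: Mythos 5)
Your proposal follows essentially the same route as the paper: induction on the length of $\psi$, peeling off the final letter $\omega$, expanding via the defining clause of ${\mathbb{IF}}_{\mathscr{A}^{\ast}}$, interchanging the quantifiers over the two intermediate states, and closing with the induction hypothesis applied to the pair $(\xi,\sigma)$. One slip: $\mathbf{0}$ is the identity of $(\Delta^{\ast},\odot)$, so $\xi \odot \mathbf{0} = \xi$ rather than $\mathbf{0}$; your degenerate case still goes through (the antecedent for $A^{\ast}$ evaluates to $0$ and the consequent for $B^{\ast}$ to $1$ either way), but the identity as you state it would make $\mathbf{0}$ absorbing rather than neutral, which is not how the paper uses it in its own base case $n=0$. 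Your closing paragraph on threshold erosion is the one place you are more careful than the paper: the printed proof is a bare chain of $\rightarrow$'s which, read literally as composed $\lambda$-implications, would indeed lose $(1-\lambda)$ at each link; reading the defining clauses as exact $\sup$--$\min$ (respectively $\inf$--$\max$) identities, as you propose, is what actually turns the chain into a string of equalities and genuine inequalities and lets the induction close at the same $\lambda$.
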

\begin{proof}
Let $\alpha, \beta \in \Omega$ and $ \xi, \psi \in \Delta^{\ast} $. \\
The theorem is proved by the method of induction on the $ord(\psi) = n$.
If $n = 0$ then $\psi = \textbf{0}$. 
$ \xi \odot \psi = \xi \odot \textbf{0} = \xi$.
\begin{equation}
\begin{aligned}
&\vDash_{\lambda}\left(\exists \gamma \{((\beta,\xi, \gamma)\in A^{\ast}) \wedge ((\gamma, \psi, \alpha) \in A^{\ast}) \}; \gamma \in \Omega \right) \\ \nonumber
&\rightarrow \left(\exists \gamma \{((\beta, \xi, \gamma)\in A^{\ast}) \wedge ((\gamma,\textbf{0},\alpha) \in A^{\ast}) \}; \gamma \in \Omega \right) \\ \nonumber
&\rightarrow ((\beta,\xi,\alpha) \in A^{\ast}) \\ \nonumber
&\rightarrow ((\beta, \xi \odot \psi, \alpha) \in A^{\ast})  \nonumber
\end{aligned}
\end{equation}
\begin{equation}
\begin{aligned}
&\vDash_{\lambda} \left( \forall \gamma \{ ((\beta,\xi,\gamma)\in B^{\ast}) \vee ((\gamma,\psi,\alpha) \in B^{\ast}) \};  \gamma \in \Omega \right)  \\ \nonumber
&\rightarrow \left( \forall \gamma \{ ((\beta, \xi, \gamma)\in B^{\ast}) \vee ((\gamma,\textbf{0},\alpha) \in B^{\ast}) \};  \gamma \in \Omega \right)  \\ \nonumber
&\rightarrow ((\beta, \xi, \alpha) \in B^{\ast}) \\ \nonumber
&\rightarrow ((\beta, \xi \odot \psi, \alpha) \in B^{\ast})  \nonumber
\end{aligned}
\end{equation}
Therefore the result is true for $n = 0$.\\
Assume that the result hold for $\zeta \in \Delta^{\ast} $ such that $ord(\zeta) = n - 1$ and $n > 0$.
Let  $\psi \in \Delta^{\ast} $ such that $\psi = \zeta \odot \omega$ with $\zeta \in \Delta^{\ast} $, $\omega \in \Delta $,  $ord(\zeta) = n - 1$ and $n > 0$.
\begin{equation}
\begin{aligned}
&\vDash_{\lambda}\left(\exists \gamma \{((\beta,\xi,\gamma)\in A^{\ast}) \wedge ((\gamma,\psi,\alpha) \in A^{\ast}) \}; \gamma \in \Omega \right) \\ \nonumber
&\rightarrow \left(\exists \gamma \{((\beta,\xi,\gamma)\in A^{\ast}) \wedge ((\gamma,\zeta \odot \omega,\alpha) \in A^{\ast}) \}; \gamma \in \Omega \right)  \\ \nonumber
&\rightarrow \left(\exists \gamma \{((\beta,\xi,\gamma)\in A^{\ast}) \wedge \left(\exists \delta \{ ((\gamma,\zeta,\delta) \in A^{\ast}) \wedge ((\delta,\omega,\alpha) \in A^{\ast})  \}; \delta \in \Omega \right) \}; \gamma \in \Omega \right)\\
&\rightarrow \left(\exists \gamma, \delta \{ ((\beta,\xi,\gamma)\in A^{\ast}) \wedge ((\gamma,\zeta,\delta) \in A^{\ast}) \wedge  ((\delta,\omega,\alpha) \in A^{\ast})  \}; \gamma, \delta \in \Omega \right)  \\
&\rightarrow \left(\exists \delta \{ \left( \exists \gamma\{ ((\beta,\xi,\gamma)\in A^{\ast}) \wedge ((\gamma,\zeta,\delta) \in A^{\ast}) \};  \gamma \in \Omega \right) \wedge ((\delta,\omega,\alpha) \in A^{\ast}) \}; \delta \in \Omega \right) \\
&\rightarrow \left( \exists \delta \{ ((\beta, \xi \odot \zeta, \delta) \in A^{\ast})  \wedge ((\delta,\omega,\alpha) \in A^{\ast}) \}; \delta \in \Omega \right) \\ \nonumber
&\rightarrow ((\beta ,\xi \odot \zeta \odot \omega, \alpha) \in A^{\ast}) \\ \nonumber
&\rightarrow ((\beta ,\xi \odot \psi, \alpha) \in A^{\ast})  \nonumber
\end{aligned}
\end{equation}
\begin{equation}
\begin{aligned}
&\vDash_{\lambda}\left(\forall \gamma \{((\beta,\xi,\gamma)\in B^{\ast}) \vee ((\gamma,\psi,\alpha) \in B^{\ast}) \}; \gamma \in \Omega \right) \\ \nonumber
&\rightarrow \left(\forall \gamma \{((\beta,\xi,\gamma)\in B^{\ast}) \vee ((\gamma,\zeta \odot \omega,\alpha) \in B^{\ast}) \}; \gamma \in \Omega \right)  \\ \nonumber
&\rightarrow \left(\forall \gamma \{((\beta,\xi,\gamma)\in B^{\ast}) \vee \left(\forall \delta \{ ((\gamma,\zeta,\delta) \in B^{\ast}) \vee ((\delta,\omega,\alpha) \in B^{\ast})  \}; \delta \in \Omega \right) \}; \gamma \in \Omega \right) \\
&\rightarrow \left(\forall \gamma, \delta \{ ((\beta,\xi,\gamma)\in B^{\ast}) \vee ((\gamma,\zeta,\delta) \in A^{\ast}) \vee ((\delta,\omega,\alpha) \in B^{\ast})  \}; \gamma, \delta \in \Omega \right) \\
&\rightarrow \left(\forall \delta \{ \left( \forall \gamma\{ ((\beta,\xi,\gamma)\in B^{\ast}) \vee ((\gamma,\zeta,\delta) \in B^{\ast}) \}; \gamma \in \Omega \right) \vee ((\delta,\omega,\alpha) \in B^{\ast}) \}; \delta \in \Omega \right) \\
&\rightarrow \left( \forall \delta \{ ((\beta, \xi \odot \zeta, \delta) \in B^{\ast})  \vee ((\delta,\omega,\alpha) \in B^{\ast}) \}; \delta \in \Omega \right) \\ \nonumber
&\rightarrow ((\beta ,\xi \odot \zeta \odot \omega, \alpha) \in B^{\ast}) \\ \nonumber
&\rightarrow ((\beta ,\xi \odot \psi, \alpha) \in B^{\ast})  \nonumber
\end{aligned}
\end{equation}
Hence the result is valid for $ord(\psi) = n$. This completes the proof. 
\end{proof}
\begin{defn}
Let ${\mathbb{IF}}_{\mathscr{A}} = \langle A,B \rangle$ be an \emph{implication-based intuitionistic fuzzy subgroup} over a finite group $(\Omega,.)$ and let $\mathbb{IS} = \left(\Omega, \Delta, {\mathbb{IF}}_{\mathscr{A}} \right)$ be an \emph{implication-based intuitionistic fuzzy semiautomaton} over the finite group $(\Omega,\cdot)$. Then an \emph{intuitionistic fuzzy subset} $\langle \iota, \overline{A}, \overline{B} \rangle$ of the group $\Omega$ is called an \emph{implication-based} \emph{intuitionistic fuzzy} \emph{subsemiautomaton} of $\mathbb{IS}$ if 
\begin{enumerate}[(i)]
\item ${\mathbb{IF}}_{\tilde{\mathscr{A}}} = \langle \overline{A}, \overline{B} \rangle$  is an \emph{implication-based intuitionistic fuzzy subgroup} of $\Omega$. 
\item $\vDash_{\lambda} ((\alpha,\xi,\beta) \in A) \wedge (\alpha \in \overline{A}) \rightarrow (\beta \in \overline{A}) $ 
\item $\vDash_{\lambda} (\beta \in \overline{B}) \rightarrow  ((\alpha,\xi,\beta) \in B) \vee (\alpha \in \overline{B})$ 
\item [ ] \mbox{ } \hspace{ 2 cm} \mbox{ for all } $ \alpha, \beta \in \Omega $ , $ \xi \in \Delta $
\end{enumerate}
\end{defn}
\begin{defn}
Let ${\mathbb{IF}}_{\mathscr{A}} = \langle A,B \rangle$ be an \emph{implication-based intuitionistic fuzzy subgroup} over a finite group $(\Omega,\cdot)$ and let $\mathbb{IS} = \left(\Omega, \Delta, {\mathbb{IF}}_{\mathscr{A}} \right)$ be an \emph{implication-based intuitionistic fuzzy semiautomaton} over the finite group $(\Omega,\cdot)$. Then an \emph{intuitionistic fuzzy subset} $\langle \iota, \overline{A}, \overline{B} \rangle$ of the group $\Omega$ is called an \emph{implication-based} \emph{intuitionistic fuzzy} \emph{kernel} of $\mathbb{IS}$ if 
\begin{enumerate}[(i)]
\item ${\mathbb{IF}}_{\tilde{\mathscr{A}}} = \langle \overline{A}, \overline{B} \rangle$  is an \emph{implication-based intuitionistic fuzzy normal subgroup} of $\Omega$. 
\item $\vDash_{\lambda} ((\beta \kappa, \xi,\alpha) \in A) \wedge ((\beta, \xi, \gamma) \in A) \wedge (\kappa \in \overline{A}) \rightarrow (\alpha \gamma^{-1} \in \overline{A}) $ 
\item $\vDash_{\lambda} (\alpha \gamma^{-1} \in \overline{B}) \rightarrow  ((\beta \kappa,\xi, \alpha) \in B) \vee ((\beta, \xi, \gamma) \in B) \vee (\kappa \in \overline{B})$ 
\item[ ] $ \mbox{ } \hspace{3 cm} \mbox{ for all }  \alpha, \beta, \gamma, \kappa \in \Omega $ , $ \xi \in \Delta $
\end{enumerate}
\end{defn}
\begin{thm}
Let $\mathbb{IS} = \left(\Omega, \Delta, {\mathbb{IF}}_{\mathscr{A}} \right)$ be an implication-based intuitionistic fuzzy semiautomaton over the finite group $(\Omega,\cdot)$. Let ${\mathbb{IF}}_{\tilde{\mathscr{A}}} = \langle \overline{A}, \overline{B} \rangle$ be an implication-based intuitionistic fuzzy subgroup of $\Omega$. Then ${\mathbb{IF}}_{\tilde{\mathscr{A}}}$ is an implication-based intuitionistic fuzzy subsemiautomaton of $\mathbb{IS}$ if and only if 
\begin{equation}
\begin{aligned}
&\vDash_{\lambda} ((\alpha,\xi,\beta) \in A^{\ast}) \wedge (\alpha \in \overline{A}) \rightarrow (\beta \in \overline{A})  \\ \nonumber
&\vDash_{\lambda} (\beta \in \overline{B})  \rightarrow  ((\alpha, \xi, \beta) \in B^{\ast}) \vee (\alpha \in \overline{B}) 
&\mbox{ } \hspace{ 2cm} \mbox{ for all } \alpha, \beta \in \Omega  ,  \xi \in {\Delta}^{\ast} 
\end{aligned}
\end{equation}
\end{thm}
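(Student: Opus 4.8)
The statement is an equivalence whose two directions are very unbalanced; the plan is to dispose of the ``if'' direction by a one-line specialisation and to spend all the effort on the ``only if'' direction, which I would prove by induction on $ord(\xi)$.

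For the ``if'' direction, note that every single logic variable lies in $\Delta^{\ast}$ and that on length-one words the extended relations $A^{\ast},B^{\ast}$ agree with $A,B$ (the base clause of the definition of $\mathbb{IF}_{\mathscr{A}^{\ast}}$). Restricting the hypothesised $\lambda$-tautologies from $\xi\in\Delta^{\ast}$ to $\xi\in\Delta$ therefore reproduces verbatim conditions (ii) and (iii) in the definition of an implication-based intuitionistic fuzzy subsemiautomaton; combined with the assumed subgroup property (i), this shows $\mathbb{IF}_{\tilde{\mathscr{A}}}$ is a subsemiautomaton.

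For the ``only if'' direction, assume (i)--(iii) and argue by induction on $ord(\xi)=n$. When $n=0$, $\xi=\textbf{0}$; the defining clauses make $((\alpha,\textbf{0},\beta)\in A^{\ast})$ have value $0$ and $((\alpha,\textbf{0},\beta)\in B^{\ast})$ have value $1$, so the conjunction $((\alpha,\textbf{0},\beta)\in A^{\ast})\wedge(\alpha\in\overline{A})$ is $0$ and the disjunction $((\alpha,\textbf{0},\beta)\in B^{\ast})\vee(\alpha\in\overline{B})$ is $1$; each target implication then evaluates to $1\ge\lambda$ and both starred tautologies hold trivially. When $n=1$, $\xi\in\Delta$ and the starred conditions are literally (ii) and (iii). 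For the inductive step take $n\ge 2$, assume the claim for order $n-1$, and factor $\xi=\zeta\odot\omega$ with $ord(\zeta)=n-1$ and $\omega\in\Delta$. I would then reproduce the telescoping argument of the preceding theorem: the composition property for $A^{\ast}$ routes a transition $((\alpha,\zeta\odot\omega,\beta)\in A^{\ast})$ through an intermediate state, $\exists\gamma\{((\alpha,\zeta,\gamma)\in A^{\ast})\wedge((\gamma,\omega,\beta)\in A^{\ast})\}$; the induction hypothesis applied to $\zeta$ carries membership in $\overline{A}$ from $\alpha$ to $\gamma$, and condition (ii), applied to the single variable $\omega$, carries it from $\gamma$ to $\beta$. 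Chaining these $\lambda$-tautologies in the same order as in the preceding theorem yields the starred condition for $A^{\ast}$. The $B^{\ast}$ statement is the order-dual: the $\forall$--$\vee$ composition routes $((\alpha,\zeta\odot\omega,\beta)\in B^{\ast})$ through a state $\gamma$, and the induction hypothesis on $\zeta$ together with (iii) on $\omega$ push non-membership in $\overline{B}$ from $\beta$ back through $\gamma$ to $\alpha$.

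The part needing the most care is the quantifier bookkeeping in the inductive step, namely merging and then re-separating the two state quantifiers ($\exists\gamma,\delta$ and $\forall\gamma,\delta$) exactly as in the preceding theorem, while keeping each triple in the correct state-order $(\alpha,\zeta,\gamma)$, $(\gamma,\omega,\beta)$ so that the composition property and the induction hypothesis attach to the right factor. A subtler point, hidden by the telescoping notation, is that every arrow is only a $\lambda$-tautology and not a strict implication, so a fully rigorous version should confirm at the level of membership values that composing the defining implication for $A^{\ast}$, the induction hypothesis on $\zeta$, and condition (ii) on $\omega$ still leaves the truth value of the target implication at least $\lambda$; concretely I would verify $\overline{A}(\beta)\ge\min\{\,((\alpha,\xi,\beta)\in A^{\ast}),\,\overline{A}(\alpha)\,\}$ along a maximising intermediate state $\gamma$, and dually $\overline{B}(\beta)\le\max\{\,((\alpha,\xi,\beta)\in B^{\ast}),\,\overline{B}(\alpha)\,\}$ along a minimising $\gamma$, which is precisely what the chain of arrows encodes.
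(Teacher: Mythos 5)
Your proposal follows essentially the same route as the paper: the converse is dispatched by specialising $\xi\in\Delta^{\ast}$ to $\xi\in\Delta$, and the forward direction is an induction on $ord(\xi)$ whose base case uses the defining clauses for $\textbf{0}$ and whose inductive step factors $\xi=\zeta\odot\omega$, routes the $A^{\ast}$ (resp.\ $B^{\ast}$) transition through an intermediate state, applies the induction hypothesis on $\zeta$ and then condition (ii) (resp.\ (iii)) on the single variable $\omega$. Your closing remark about verifying that chaining $\lambda$-tautologies under the \L ukasiewicz implication actually preserves the bound is a legitimate point of rigour that the paper's own proof also leaves implicit, but it does not change the structure of the argument.
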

\begin{proof}
Let ${\mathbb{IF}}_{\tilde{\mathscr{A}}} = \langle \overline{A}, \overline{B} \rangle$ be an \emph{implication-based} \emph{intuitionistic fuzzy} \emph{subsemiautomaton} of $\mathbb{IS}$.\\
Let $\alpha, \beta \in \Omega $ , $ \xi \in {\Delta}^{\ast} $.
The proof is by the method of induction on $ord(\xi) = n$.
If $n = 0$ then $\xi = \textbf{0}$. 
\begin{equation}
\begin{aligned}
\vDash_{\lambda} ((\alpha,\xi,\beta) \in A^{\ast}) \wedge (\alpha \in \overline{A}) 
& \rightarrow ((\alpha,\textbf{0},\beta) \in A^{\ast}) \wedge (\alpha \in \overline{A}) \\ \nonumber
&\rightarrow (0 \in A^{\ast}) \wedge (\alpha \in \overline{A}) \\ \nonumber
&\rightarrow (0 \in A^{\ast}) \\ \nonumber
&\rightarrow (\beta \in \overline{A})  \nonumber
\end{aligned}
\end{equation}
\begin{equation}
\begin{aligned}
\vDash_{\lambda} (\beta \in \overline{B}) 
&\rightarrow 1 \\ \nonumber
&\rightarrow 1 \vee  (\alpha \in \overline{B}) \\ \nonumber
&\rightarrow ((\alpha,\textbf{0},\beta) \in B^{\ast}) \vee  (\alpha \in \overline{B}) \\ \nonumber
&\rightarrow ((\alpha, \xi, \beta) \in B^{\ast}) \vee  (\alpha \in \overline{B}) \nonumber
\end{aligned}
\end{equation}
Therefore the result is true for $n = 0$.
Assume that the result is true for all $\psi \in {\Delta}^{\ast} $ such that $ord(\psi) = n - 1$ and $n > 0$. 
Let $\xi = \psi \odot \omega$ where $\omega \in \Delta $. Then
\begin{equation}
\begin{aligned}
\vDash_{\lambda} ((\alpha,\xi,\beta) \in A^{\ast}) \wedge (\alpha \in \overline{A}) 
& \rightarrow ((\alpha, \psi \odot \omega ,\beta) \in A^{\ast}) \wedge (\alpha \in \overline{A}) \\ \nonumber
& \rightarrow \left( \exists \gamma \{((\alpha,\psi,\gamma)\in A^{\ast}) \wedge ((\gamma,\omega,\beta) \in A) \}; \gamma \in \Omega \right) \wedge (\alpha \in \overline{A}) \\ \nonumber
&\rightarrow \left( \exists \gamma \{((\alpha, \psi, \gamma)\in A^{\ast}) \wedge ((\gamma,\omega,\beta) \in A) \wedge  (\alpha \in \overline{A}) \}; \gamma \in \Omega \right) \\
& \rightarrow \left( \exists \gamma \{((\alpha,\psi,\gamma) \in A^{\ast}) \wedge (\alpha \in \overline{A}) \wedge ((\gamma,\omega,\beta) \in A)  \}; \gamma \in \Omega \right) \\ \nonumber
& \rightarrow \left( \exists \gamma \{ (\gamma \in \overline{A}) \wedge ((\gamma,\omega,\beta) \in A)  \}; \gamma \in \Omega \right)  \\ \nonumber
&\rightarrow \left( \exists \gamma \{((\gamma,\omega,\beta) \in A) \wedge (\gamma \in \overline{A}) \}; \gamma \in \Omega\right)  \\ \nonumber
& \rightarrow (\beta \in \overline{A}) \nonumber
\end{aligned}
\end{equation}
\begin{equation}
\begin{aligned}
\vDash_{\lambda} (\beta \in \overline{B})
&\rightarrow \left( \forall \gamma \{((\gamma,\omega,\beta) \in B) \vee (\gamma \in \overline{B}) \}; \gamma \in \Omega \right)  \\ \nonumber
&\rightarrow \left( \forall \gamma \{ ((\alpha,\psi,\gamma)\in B^{\ast}) \vee (\alpha \in \overline{B}) \vee ((\gamma,\omega,\beta) \in B) \}; \gamma \in \Omega \right) \\ \nonumber
&\rightarrow \left( \forall \gamma \{ ((\alpha,\psi,\gamma)\in B^{\ast})  \vee ((\gamma,\omega,\beta) \in B) \};  \gamma \in \Omega \right) \vee (\alpha \in \overline{B})\\
&\rightarrow ((\alpha,\psi \odot \omega ,\beta) \in B^{\ast}) \vee (\alpha \in \overline{B}) \\ \nonumber
&\rightarrow ((\alpha,\xi,\beta) \in B^{\ast}) \vee (\alpha \in \overline{B}) \nonumber 
\end{aligned}
\end{equation}
The converse is trivial.
\end{proof}
\begin{thm}
Let $\mathbb{IS} = \left(\Omega, \Delta, {\mathbb{IF}}_{\mathscr{A}} \right)$ be an implication-based intuitionistic fuzzy semiautomaton over the finite group $(\Omega,\cdot)$. Let ${\mathbb{IF}}_{\tilde{\mathscr{A}}} = \langle \overline{A}, \overline{B} \rangle$ be an implication-based intuitionistic fuzzy subgroup of $\Omega$. Then ${\mathbb{IF}}_{\tilde{\mathscr{A}}}$ is an implication-based intuitionistic fuzzy kernel of $\mathbb{IS}$ if and only if  
\begin{equation}
\begin{aligned}
&\mbox{(i)} \vDash_{\lambda} ((\beta \kappa,\xi,\alpha) \in A^{\ast}) \wedge ((\beta,\xi,\gamma) \in A^{\ast}) \wedge (\kappa \in \overline{A}) \rightarrow (\alpha \gamma^{-1} \in \overline{A}) \\ \nonumber
&\mbox{(ii)}\vDash_{\lambda} (\alpha \gamma^{-1} \in \overline{B}) \rightarrow  ((\beta \kappa, \xi,\alpha) \in B^{\ast}) \vee ((\beta,\xi,\gamma) \in B^{\ast})  \vee (\kappa \in \overline{B}) \\ \nonumber
&\mbox{ } \hspace{ 1.5 cm}   \mbox{ for all }  \alpha, \beta, \gamma, \kappa \in \Omega  ,  \xi \in {\Delta}^{\ast} \nonumber
\end{aligned}
\end{equation}
\end{thm}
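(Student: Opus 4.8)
The plan is to follow the template of the preceding subsemiautomaton theorem: establish the forward implication by induction on $n = ord(\xi)$ for $\xi \in \Delta^{\ast}$, and observe that the converse is immediate, since restricting the two displayed conditions to a single variable $\xi \in \Delta$ returns exactly the defining conditions for $\overline{A}$ and $\overline{B}$ in the kernel definition. For the base case $n = 0$ we put $\xi = \textbf{0}$; condition (i) is vacuous because $((\beta\kappa, \textbf{0}, \alpha) \in A^{\ast})$ has truth value $0$, so its antecedent conjunction is $0$ and the \L ukasiewicz implication evaluates to $1$, while condition (ii) holds because $((\beta\kappa, \textbf{0}, \alpha) \in B^{\ast})$ has truth value $1$, making the right-hand disjunction $1$.

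For the inductive step I would assume both conclusions for every $\zeta \in \Delta^{\ast}$ with $ord(\zeta) = n-1$ and write $\xi = \psi \odot \omega$ with $ord(\psi) = n-1$ and $\omega \in \Delta$. The engine of the argument is Theorem 3.1. It splits $((\beta\kappa, \psi\odot\omega, \alpha) \in A^{\ast})$ through an intermediate state $\delta$ into $((\beta\kappa, \psi, \delta) \in A^{\ast}) \wedge ((\delta, \omega, \alpha) \in A)$, and splits $((\beta, \psi\odot\omega, \gamma) \in A^{\ast})$ through a second state $\eta$ into $((\beta, \psi, \eta) \in A^{\ast}) \wedge ((\eta, \omega, \gamma) \in A)$. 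Feeding the two $\psi$-transitions into the induction hypothesis (matched by $\alpha' = \delta$, $\gamma' = \eta$, $\kappa' = \kappa$) yields $(\delta\eta^{-1} \in \overline{A})$. The decisive step is then to pass to the conjugate: the $\omega$-level defining condition needs $(\eta^{-1}\delta \in \overline{A})$, and this is supplied by the normality of $\langle \overline{A}, \overline{B}\rangle$ through $\vDash_{\lambda}(\delta\eta^{-1} \in \overline{A}) \rightarrow (\eta^{-1}\delta \in \overline{A})$. Applying the $\omega$-level condition with $\beta'' = \eta$ and $\kappa'' = \eta^{-1}\delta$ (so $\beta''\kappa'' = \delta$) then produces $(\alpha\gamma^{-1} \in \overline{A})$, closing the $A^{\ast}$ induction.

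The $B^{\ast}$ half is the De Morgan dual. Starting from $(\alpha\gamma^{-1} \in \overline{B})$, I apply the $\omega$-level defining condition for $\overline{B}$ to obtain, for each $\delta, \eta$, the disjunction $((\delta, \omega, \alpha) \in B) \vee ((\eta, \omega, \gamma) \in B) \vee (\eta^{-1}\delta \in \overline{B})$, replace $(\eta^{-1}\delta \in \overline{B})$ by $(\delta\eta^{-1} \in \overline{B})$ via the $\overline{B}$ normality clause, and expand the latter through the induction hypothesis into the two $\psi$-level $B^{\ast}$ terms. Taking the infimum over $\delta$ and $\eta$, the identity ${inf}_{\delta,\eta}\,max(P(\delta), Q(\eta), c) = max\big({inf}_{\delta} P(\delta), {inf}_{\eta} Q(\eta), c\big)$ separates the two families, and Theorem 3.1 folds ${inf}_{\delta}$ and ${inf}_{\eta}$ back into $((\beta\kappa, \psi\odot\omega, \alpha) \in B^{\ast})$ and $((\beta, \psi\odot\omega, \gamma) \in B^{\ast})$, finishing the step.

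I expect the crux to be exactly the conjugation bookkeeping across the two simultaneous transitions: the induction hypothesis naturally delivers $\delta\eta^{-1}$ while the next stage consumes $\eta^{-1}\delta$, so the proof genuinely requires a \emph{normal} subgroup and would fail for a mere subgroup --- this is what separates the kernel from the subsemiautomaton case. A lesser technical point is the licence to move the constant term $(\kappa \in \overline{A})$ (respectively $(\kappa \in \overline{B})$) outside the quantifiers over $\delta$ and $\eta$, which I would justify by the same $min$/$max$ distributivity already underlying Theorem 3.1.
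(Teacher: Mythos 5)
Your proposal is correct and follows essentially the same route as the paper's proof: induction on $ord(\xi)$, the base case settled by the truth values $0$ and $1$ of $A^{\ast}$ and $B^{\ast}$ on $\textbf{0}$, and the inductive step decomposing both transitions through intermediate states, feeding the $\psi$-parts into the induction hypothesis, and closing with the $\omega$-level kernel condition (with the converse obtained by restricting to $\Delta$). If anything, your explicit handling of the conjugation step --- passing from $\delta\eta^{-1}$ to $\eta^{-1}\delta$ via normality so that $\eta(\eta^{-1}\delta)=\delta$ --- is cleaner than the paper's corresponding lines, which write $\eta\cdot(\zeta\eta^{-1})$ where $(\eta\eta^{-1})\zeta=\eta(\eta^{-1}\zeta)$ is what associativity actually gives.
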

\begin{proof}
Let ${\mathbb{IF}}_{\tilde{\mathscr{A}}} = \langle \overline{A}, \overline{B} \rangle$ be an \emph{implication-based} \emph{intuitionistic fuzzy} \emph{kernel} of $\mathbb{IS}$.
Let $\alpha, \beta, \gamma, \gamma \in \Omega $ , $ \xi \in {\Delta}^{\ast} $.
The proof is by the method of induction on $ord(x) = n$.
If If $n = 0$ then $\xi = \textbf{0}$. 
\begin{equation}
\begin{aligned}
&\vDash_{\lambda} ((\beta \kappa,\xi,\alpha) \in A^{\ast}) \wedge ((\beta,\xi,\gamma) \in A^{\ast}) \wedge (\kappa \in \overline{A}) \\ \nonumber
&\rightarrow ((\beta \kappa,\textbf{0},\alpha) \in A^{\ast}) \wedge ((\beta,\textbf{0},\gamma) \in A^{\ast}) \wedge (\kappa \in \overline{A}) \\ \nonumber
&\rightarrow 0 \wedge 0 \wedge (\kappa \in \overline{A}) \\ \nonumber
&\rightarrow 0 \\ \nonumber
&\rightarrow (\alpha \gamma^{-1} \in \overline{A})
\end{aligned}
\end{equation}
\begin{equation}
\begin{aligned}
\vDash_{\lambda} (\alpha \gamma^{-1} \in \overline{B}) 
&\rightarrow 1 \\ \nonumber
&\rightarrow 1 \vee 1 \vee (\kappa \in \overline{B}) \\ \nonumber
&\rightarrow ((\beta \kappa,\textbf{0},\alpha) \in B^{\ast}) \vee ((\beta,\textbf{0},\gamma) \in B^{\ast}) \vee (\kappa \in \overline{B}) \\ \nonumber
&\rightarrow ((\beta \kappa,\xi, \alpha) \in B^{\ast}) \vee ((\beta ,\xi, \gamma) \in B^{\ast}) \vee (\kappa \in \overline{B}) \nonumber
\end{aligned}
\end{equation}
Thus the result holds for $n = 0$.
Assume that the result holds for all $y \in {\Delta}^{\ast} $ such that $ord(\psi) = n - 1$ and $n > 0$. 
Let $ \xi \in {\Delta}^{\ast} $ be such that $\xi = \psi \odot \omega$ where $\psi \in {\Delta}^{\ast} $ and  $\omega \in \Delta $ with $ord(\psi) = n - 1$ and $n > 0$. 
\setlength{\abovedisplayskip}{0pt}
\setlength{\belowdisplayskip}{0pt}
\begin{equation}
\begin{aligned}
&\vDash_{\lambda} ((\beta \kappa,\xi,\alpha) \in A^{\ast}) \wedge ((\beta,\xi,\gamma) \in A^{\ast}) \wedge (\kappa \in \overline{A}) \\ \nonumber
&\rightarrow ((\beta \kappa,\psi \odot \omega,\alpha) \in A^{\ast}) \wedge ((\beta,\psi \odot \omega, \gamma) \in A^{\ast}) \wedge  (\kappa \in \overline{A}) \\ \nonumber
&\rightarrow \left( \exists \zeta \{ ((\beta \kappa,\psi, \zeta) \in A^{\ast}) \wedge ((\zeta,\omega, \alpha) \in A) \}; \zeta \in \Omega \right) \wedge \left(  \exists \eta \{ ((\beta,\psi,\eta) \in A^{\ast}) \wedge \right.  \\ \nonumber
&\mbox{  } \hspace{2 cm} \left. ((\eta,\omega,\gamma) \in A) \};  \gamma \in \Omega \right) \wedge (\kappa \in \overline{A})  
\end{aligned}
\end{equation}
\begin{equation}
\begin{aligned}
&\rightarrow \left( \exists \eta \{ \exists \zeta \{ ((\beta \kappa,\psi,\alpha) \in A^{\ast}) \wedge ((\alpha,\omega,\alpha) \in A) \wedge (\kappa \in \overline{A})  \wedge ((\beta, \psi, \eta) \in A^{\ast}) \wedge \right. \\ \nonumber
&\mbox{  } \hspace{2 cm} \left.  ((\eta, \omega,\gamma) \in A) \}; \zeta \in \Omega \}; \eta \in \Omega \right)  \\ \nonumber
&\rightarrow \left( \exists \eta \{ \exists \zeta \{ ((\beta \kappa,\psi,\alpha) \in A^{\ast}) \wedge ((\beta,\psi,\eta) \in A^{\ast}) \wedge (\kappa \in \overline{A}) \wedge ((\zeta,\omega,\alpha) \in A)  \wedge \right. \\ \nonumber
&\mbox{  } \hspace{2 cm} \left.  ((\eta, \omega,\gamma) \in A) \}; \zeta \in \Omega \}; \eta \in \Omega \right)  \\ \nonumber
&\rightarrow \left( \exists \eta \{ \exists \zeta \{(\zeta \eta^{-1} \in \overline{A}) \wedge ((\zeta,\omega,\alpha) \in A)  \wedge ((\eta,\omega,\gamma) \in A) \}; \zeta \in \Omega \}; \eta \in \Omega \right)  \\ \nonumber
&\rightarrow \left( \exists \eta \{ \exists \zeta \{((\eta,\omega,\gamma) \in A) \wedge (((\eta.\eta^{-1})\alpha,\omega,\alpha) \in A) \wedge (\alpha \eta^{-1} \in \overline{A}) \}; \zeta \in \Omega \}; \eta \in \Omega \right)  \\ \nonumber
&\rightarrow \left( \exists \eta \{ \exists \zeta \{((\eta,\omega, \gamma) \in A) \wedge ((\eta.(\alpha \eta^{-1}),\omega,\alpha) \in A) \wedge (\zeta \eta^{-1} \in \overline{A}) \}; \zeta \in \Omega \}; \eta \in \Omega \right) \\ \nonumber
&\rightarrow \left( \exists \eta \{ \exists \zeta \{ ((\eta.(\zeta \eta^{-1}),\omega,\alpha) \in A) \wedge ((\eta,\omega,\gamma) \in A) \wedge (\zeta \eta^{-1} \in \overline{A}) \}; \zeta \in \Omega \}; \eta \in \Omega \right) \\ \nonumber
&\rightarrow (\alpha \gamma^{-1} \in \overline{A}) \nonumber
\end{aligned}
\end{equation}
\setlength{\abovedisplayskip}{0pt}
\setlength{\belowdisplayskip}{0pt}
\begin{equation}
\begin{aligned}
&\vDash_{\lambda} (\alpha \gamma^{-1} \in \overline{B})  \\ \nonumber
&\rightarrow \left( \forall \eta \{ \forall \zeta \{ ((\eta.(\zeta \eta^{-1}),\omega,\alpha) \in B)  \vee ((\eta,\omega,\gamma) \in B) \vee  (\zeta \eta^{-1} \in \overline{B}) \};  \zeta \in \Omega \}; \eta \in \Omega \right)  \\ \nonumber
&\rightarrow \left( \forall \eta \{ \forall \zeta \{((\eta,\omega,\gamma) \in B) \vee ((\eta.(\zeta \eta^{-1}),\omega,\alpha) \in B) \vee (\zeta \eta^{-1} \in \overline{B}) \}; \zeta \in \Omega \}; \eta \in \Omega \right) \\
&\rightarrow \left( \forall \eta \{ \forall \zeta \{((\eta,\omega,\gamma) \in B) \vee (((\eta.\eta^{-1})\zeta,\omega,\alpha) \in B) \vee (\zeta \eta^{-1} \in \overline{B}) \}; \zeta \in \Omega \}; \eta \in \Omega \right)  \\
&\rightarrow \left( \forall \eta \{ \forall \zeta \{(\zeta \eta^{-1} \in \overline{B}) \vee ((\zeta,\omega,\alpha) \in B)  \vee ((\eta,\omega,\gamma) \in B) \}; \zeta \in \Omega \}; \eta \in \Omega \right)  \\
&\rightarrow \left( \forall \eta \{ \forall \zeta \{ ((\beta \kappa,\psi,\alpha) \in B^{\ast}) \vee ((\beta,\psi,\eta) \in B^{\ast})  \vee (\kappa \in \overline{B}) \vee ((\zeta,\omega,\alpha) \in B)  \vee  \right. \\ \nonumber
&\mbox{  } \hspace{3 cm}  \left.  ((\eta,\omega,\gamma) \in B) \}; \zeta \in \Omega \}; \eta \in \Omega \right) \\ \nonumber
&\rightarrow \left( \forall \eta \{ \forall \zeta \{ ((\beta \kappa,\psi,\alpha) \in B^{\ast}) \vee ((\zeta,\omega,\alpha) \in B) \vee (\kappa \in \overline{B}) \vee ((\beta,\psi,\eta) \in B^{\ast}) \vee    \right. \\ \nonumber
&\mbox{  } \hspace{3 cm} \left.  ((\eta,\omega,\gamma) \in B) \}; \zeta \in \Omega \}; \eta \in \Omega \right)\\
&\rightarrow \left( \forall \zeta \{ ((\beta \kappa,\psi,\alpha) \in B^{\ast}) \vee ((\zeta,\omega,\alpha) \in B) \}; \zeta \in \Omega \right) \vee \left(  \forall \eta \{ ((\beta,\psi,\eta) \in B^{\ast}) \vee \right. \\ \nonumber
&\mbox{  } \hspace{3 cm} \left. ((\eta,\omega,\gamma) \in B) \}; \eta \in \Omega \right) \vee (\kappa \in \overline{B})  \\
&\rightarrow ((\beta \kappa,\psi \odot \omega,\alpha) \in B^{\ast}) \vee ((\beta,\psi \odot \omega,\gamma) \in B^{\ast}) \vee (\kappa \in \overline{B}) \\
&\rightarrow ((\beta \kappa,\xi,\alpha) \in B^{\ast}) \vee ((\beta,\xi,\gamma) \in B^{\ast}) \vee (\kappa \in \overline{B}) \nonumber
\end{aligned}
\end{equation}
The converse is trivial. 
\end{proof}
\begin{thm}
An implication-based intuitionistic fuzzy kernel ${\mathbb{IF}}_{\tilde{\mathscr{A}}} = \langle \overline{A}, \overline{B} \rangle$ of the implication-based intuitionistic fuzzy semiautomaton $\mathbb{IS} = \left(\Omega, \Delta, {\mathbb{IF}}_{\mathscr{A}} \right)$ over the finite group $(\Omega,\cdot)$ is an implication-based intuitionistic fuzzy subsemiautomaton if and only if 
\begin{enumerate}[(i)]
\item $\vDash_{\lambda} ((\epsilon,\xi,\alpha) \in A) \wedge (\epsilon \in \overline{A}) \rightarrow (\alpha \in \overline{A})$
\item $\vDash_{\lambda} (\alpha \in \overline{B}) \rightarrow ((\epsilon,\xi,\alpha) \in B) \vee (\epsilon \in \overline{B}) \hspace{3 cm }  \forall \alpha \in \Omega, \xi \in \Delta$
\end{enumerate}
where $'\epsilon'$ is the identity element of the finite group $(\Omega,\cdot)$.
\end{thm}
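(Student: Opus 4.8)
The plan is to establish the equivalence in two directions, after first observing that the subgroup requirement in the definition of a subsemiautomaton is automatic here: an implication-based intuitionistic fuzzy kernel is by definition an implication-based intuitionistic fuzzy \emph{normal} subgroup of $\Omega$, hence in particular a subgroup, so only the two transition clauses of the subsemiautomaton need be matched against conditions (i) and (ii). The forward implication is a pure specialisation. Assuming $\mathbb{IF}_{\tilde{\mathscr{A}}}$ is an implication-based intuitionistic fuzzy subsemiautomaton, I would set $\alpha=\epsilon$ in its membership clause to obtain $\vDash_{\lambda}((\epsilon,\xi,\beta)\in A)\wedge(\epsilon\in\overline{A})\rightarrow(\beta\in\overline{A})$, which, after renaming $\beta$ as $\alpha$, is exactly condition (i); the same substitution in the non-membership clause gives condition (ii). Thus the forward direction requires no genuine computation.

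For the converse I would fix $\alpha,\beta\in\Omega$ and $\xi\in\Delta$ and reconstruct the two subsemiautomaton clauses from the kernel clauses together with conditions (i),(ii). The idea is that condition (i) says the $\xi$-image of the identity already lies in $\overline{A}$, while the kernel clause says that $\xi$-transitions carry $\overline{A}$-differences of source states to $\overline{A}$-differences of target states; since $\alpha$ differs from $\epsilon$ by the element $\alpha\in\overline{A}$, the target $\beta$ should differ from the image of $\epsilon$ by an element of $\overline{A}$, and hence lie in $\overline{A}$. Concretely, let $\gamma$ be the state reached from $\epsilon$ under $\xi$, so that the reference transition $(\epsilon,\xi,\gamma)$ is available. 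In the kernel membership clause the four group parameters are free, and I would choose them so that the first transition becomes the given $(\alpha,\xi,\beta)$ and the second becomes $(\epsilon,\xi,\gamma)$: take the kernel's ``$\beta$'' to be $\epsilon$ and its ``$\kappa$'' to be $\alpha$ (so that ``$\beta\kappa$'' $=\alpha$), identify the kernel's ``$\alpha$'' with our $\beta$, and take the kernel's ``$\gamma$'' to be $\gamma$. The kernel clause then yields $\beta\gamma^{-1}\in\overline{A}$.

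It remains to recombine. From $\alpha\in\overline{A}$ and Proposition 2.1 applied to $\overline{A}$ I get $\epsilon\in\overline{A}$, so condition (i) applied at the image $\gamma$ gives $\gamma\in\overline{A}$; since $\beta=(\beta\gamma^{-1})\gamma$, closure of the subgroup $\overline{A}$ under products then yields $\beta\in\overline{A}$, which is the membership clause of the subsemiautomaton. The non-membership clause is obtained by the dual chain with $\wedge,\min$ replaced by $\vee,\max$: starting from $\beta\in\overline{B}$, the $\max$-closure $\vDash_{\lambda}(\beta\in\overline{B})\rightarrow(\beta\gamma^{-1}\in\overline{B})\vee(\gamma\in\overline{B})$, the kernel non-membership clause at the same parameters, and condition (ii) applied at $\gamma$ together deliver $((\alpha,\xi,\beta)\in B)\vee(\alpha\in\overline{B})$ once the residual $\epsilon$-disjuncts are absorbed.

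The step I expect to be the main obstacle is the honest handling of the reference transition $(\epsilon,\xi,\gamma)$. In the membership chain it enters the kernel clause as an extra conjunct, and in the non-membership chain as extra disjuncts $((\epsilon,\xi,\gamma)\in B)$ and $(\epsilon\in\overline{B})$; to recover exactly the subsemiautomaton clauses these residual occurrences must be discharged, which is precisely where the availability of the $\xi$-image of $\epsilon$ and conditions (i),(ii) are indispensable, and where one relies on Proposition 2.1 and the (normal) subgroup closure of $\langle\overline{A},\overline{B}\rangle$ to make the group coordinates cancel (``$\beta\kappa$'' $=\alpha$, ``$\gamma$'' $=\gamma$, $\beta=(\beta\gamma^{-1})\gamma$). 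A secondary point of care is keeping the two valuation chains dual and consistent at the single fixed threshold $\lambda$.
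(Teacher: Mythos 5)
Your proposal is correct and follows essentially the same route as the paper's own proof: the forward direction by specialising the subsemiautomaton clauses at $\alpha=\epsilon$, and the converse by introducing the reference transition $(\epsilon,\xi,\gamma)$, instantiating the kernel clause with the source state written as $\epsilon$ times a group element so that it yields $\beta\gamma^{-1}\in\overline{A}$ (resp.\ the dual disjunction), obtaining $\gamma\in\overline{A}$ from condition (i) together with $\epsilon\in\overline{A}$, and recombining via the subgroup closure $\beta=(\beta\gamma^{-1})\gamma$. The ``main obstacle'' you flag — discharging the residual occurrences of the reference transition — is exactly what the paper's displayed equations (1) and (2) are for, and your proposed handling matches them.
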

\begin{proof}
Let the \emph{implication-based intuitionistic fuzzy kernel} ${\mathbb{IF}}_{\tilde{\mathscr{A}}} = \langle \overline{A}, \overline{B} \rangle$ of $\mathbb{IS}$ be an \emph{implication-based} \emph{intuitionistic fuzzy} \emph{subsemiautomaton}. \\
Let $\alpha \in \Omega, \omega \in \Delta$.
\setlength{\abovedisplayskip}{0pt}
\setlength{\belowdisplayskip}{0pt}
\begin{equation}
\begin{aligned} 
\Rightarrow &\vDash_{\lambda} ((\alpha,\xi,\beta) \in A) \wedge (\alpha \in \overline{A}) \rightarrow (\beta \in \overline{A}) \\ \nonumber
&\vDash_{\lambda} (\beta \in \overline{B}) \rightarrow ((\alpha,\xi,\beta) \in B) \vee (\alpha \in \overline{B}) \nonumber
\end{aligned}  
\end{equation}
By definition
\begin{equation}
\begin{aligned} 
\Rightarrow &\vDash_{\lambda} ((\epsilon,\xi,\alpha) \in A) \wedge (\epsilon \in \overline{A}) \rightarrow (\alpha \in \overline{A}) \\ \nonumber
&\vDash_{\lambda} (\alpha \in \overline{B}) \rightarrow ((\epsilon,\xi,\alpha) \in B) \vee (\epsilon \in \overline{B}) \nonumber
\end{aligned}
\end{equation}
Conversely, let ${\mathbb{IF}}_{\tilde{\mathscr{A}}} = \langle \overline{A}, \overline{B} \rangle$ be an \emph{implication-based intuitionistic fuzzy kernel} of $\mathbb{IS}$ satisfying the conditions
\begin{enumerate}[(i)]
\item $\vDash_{\lambda} ((\epsilon,\xi,\alpha) \in A) \wedge (\epsilon \in \overline{A}) \rightarrow (\alpha \in \overline{A})$
\item $\vDash_{\lambda} (\alpha \in \overline{B}) \rightarrow ((\epsilon,\xi,\alpha) \in B) \vee (\epsilon \in \overline{B})  \hspace{3 cm }  \forall \alpha \in \Omega, \xi \in \Delta $
\end{enumerate}
Let $\alpha, \beta \in \Omega$ and $ \xi \in \Delta$.
\setlength{\abovedisplayskip}{0pt}
\setlength{\belowdisplayskip}{0pt}
\begin{equation}
\begin{aligned} 
&\vDash_{\lambda}((\epsilon,\xi,\gamma) \in A) \wedge ((\beta,\xi,\gamma) \in A) \rightarrow ((\epsilon \beta,\xi,\gamma) \in A) \\ 
&\Rightarrow \mbox{     } \vDash_{\lambda}((\epsilon \beta,\xi,\gamma) \in A) \rightarrow ((\epsilon,\xi,\gamma) \in A) \\
&\mbox{ Also } \mbox{     } \vDash_{\lambda}(\gamma \in \overline{A}) \rightarrow (\epsilon \in \overline{A})
\end{aligned}
\end{equation}
\setlength{\abovedisplayskip}{0pt}
\setlength{\belowdisplayskip}{0pt}
\begin{equation}
\begin{aligned} 
&\vDash_{\lambda} ((\epsilon \beta,\xi,\gamma) \in B) \rightarrow ((\epsilon,\xi,\gamma) \in B) \vee ((\beta,\xi,\gamma) \in B)  \\ 
&\Rightarrow \mbox{     } \vDash_{\lambda}((\epsilon,\xi,\gamma) \in B) \rightarrow ((\epsilon \beta,\xi,\gamma) \in B)   \\
&\mbox{ Also } \mbox{     } \vDash_{\lambda}(\epsilon \in \overline{B}) \rightarrow (\gamma \in \overline{B})
\end{aligned}
\end{equation}
Now
\setlength{\abovedisplayskip}{0pt}
\setlength{\belowdisplayskip}{0pt}
\begin{equation}
\begin{aligned}
&\vDash_{\lambda} ((\beta,\xi,\alpha) \in A) \wedge (\beta \in \overline{A}) \\ \nonumber
&\rightarrow ((\beta,\xi,\alpha) \in A) \wedge ((\epsilon,\xi,\gamma) \in A) \wedge (\epsilon \in \overline{A})  \wedge (\beta \in \overline{A}) \mbox{ by (1) }  \\ \nonumber
&\rightarrow ((\epsilon \beta,\xi,\alpha) \in A) \wedge ((\epsilon,\xi,\gamma) \in A) \wedge (\beta \in \overline{A}) \wedge (\epsilon \in \overline{A})  \\
&\rightarrow (\alpha \gamma^{-1} \in \overline{A}) \wedge (\gamma \in \overline{A}) \\
&\mbox{ Since } {\mathbb{IF}}_{\tilde{\mathscr{A}}} \mbox{ is an \emph{implication-based intuitionistic }} \mbox{\emph{fuzzy kernel}} \\
&\rightarrow (\alpha \gamma^{-1}.\gamma \in \overline{A}) \\
&\rightarrow (\alpha \in \overline{A}) 
\end{aligned}
\end{equation}
\setlength{\abovedisplayskip}{0pt}
\setlength{\belowdisplayskip}{0pt}
\begin{equation}
\begin{aligned}
&\vDash_{\lambda} (\alpha \in \overline{B}) \\ \nonumber
&\rightarrow (\alpha \gamma^{-1}.\gamma \in \overline{B}) \\ \nonumber
&\rightarrow (\alpha \gamma^{-1} \in \overline{B}) \vee (\gamma \in \overline{B}) \\
&\rightarrow ((\epsilon \beta,\xi,\alpha) \in B) \vee ((\epsilon,\xi,\gamma) \in B) \vee (\beta \in \overline{B}) \vee (\epsilon \in \overline{B}) \\
&\rightarrow ((\epsilon \beta,\xi,\alpha) \in B) \vee ((\epsilon,\xi,\gamma) \in B) \vee (\epsilon \in \overline{B}) \vee (\beta \in \overline{B}) \\
&\rightarrow ((\beta,\xi,\alpha) \in B) \vee ((\epsilon,\xi,\gamma) \in B) \vee (\beta \in \overline{B}) \vee (\epsilon \in \overline{B}) \\
&\rightarrow ((\beta,\xi,\alpha) \in B) \vee (\beta \in \overline{B})
\end{aligned}
\end{equation}
Therefore ${\mathbb{IF}}_{\tilde{\mathscr{A}}} = \langle \overline{A}, \overline{B} \rangle$ is an \emph{implication-based intuitionistic fuzzy subsemiautomaton} of $\mathbb{IS}$.
\end{proof}

\section{Conclusion}
In this paper, a subsystem namely \emph{implication-based intuitionistic fuzzy semiautomaton} (IB-IFSA) of a finite group has been constituted. Further using the perception of \emph{implication-based intutioinistic fuzzy subgroup} and \emph{implication-based intuitionistic fuzzy normal subgroup} of a finite group, \emph{implication-based intuitionistic fuzzy kernel} and \emph{implication-based intuitionistic fuzzy subsemiautomaton} of an IB-IFSA are developed. The necessary and sufficient conditions for \emph{implication-based intuitionistic fuzzy kernel} and \emph{implication-based intuitionistic fuzzy subsemiautomaton} of an IB-IFSA are examined. 

\bibliography{mybibfile}

\end{document}